\begin{document}

\title{Topologically singular points in the moduli space of Riemann surfaces\thanks{Authors partially supported by the project MTM2014-55812-P 
of Ministerio de Economia y Competitividad (Spain)}
}


\author{Antonio F. Costa         \and
        Ana M. Porto
}


\institute{Antonio F. Costa \at
              Departamento de Matem\'{a}ticas Fundamentales, Facultad de Ciencias, UNED,  Senda del rey, 9,  28040 Madrid, Spain \\
              Tel.: +34 91 3987224,   \email{acosta@mat.uned.es}\\      
           \and
           Ana M. Porto \at
              Departamento de Matem\'{a}ticas Fundamentales, Facultad de Ciencias, UNED,  Senda del rey, 9,  28040 Madrid, Spain \\
              Tel.: +34 91 3987233, \email{asilva@mat.uned.es}\\      
                }
\dedication{To our friend Mar\'ia Teresa Lozano}
\date{Received: date / Accepted: date}

\maketitle

\begin{abstract}
In 1962 E. H. Rauch found the points in the moduli space of Riemann surfaces not having a neighbourhood homeomorphic to a ball. These points are called here topologically singular. We give a different proof of some of the results of Rauch and also determine the topologically singular points in the branch locus of some equisymmetric families of Riemann surfaces.

\keywords{Riemann surface \and Moduli space \and Orbifold \and Teichm\"{u}ller space}
\subclass{32G15 \and 14H15 \and 30F10 \and 30F60}
\end{abstract}
\section{Introduction}

Let $M$ be a manifold and $p:M\rightarrow N$ be a regular branched covering;
$N$ has then a structure of (good) orbifold. The set of singular values of $p$
is called the branch locus and it is the image by $p$ of the fixed points of
automorphisms of the covering $p$; it consists of both ordinary manifold
points and of points we call topologically singular points, meaning that they
do not admit a neighbourhood homeomorphic to a ball. Note that all the points outside
the branch locus are manifold points.

We shall assume, all over this paper, that $g$ is an integer $\geq2$. The moduli
space $\mathcal{M}_{g}$ of surfaces of genus $g$ is endowed with the structure of an
orbifold given by the Teichm\"{u}ller space $\mathbb{T}_{g}$ and the action of
the mapping class group that produces a covering $\mathbb{T}_{g} \rightarrow \mathcal{M}_{g} $. 
In \cite{Rau} Rauch proves, that for $g>3$, every
point in the branch locus $\mathcal{B}_{g}$ of $\mathcal{M}_{g}$ is
topologically singular, the branch loci $\mathcal{B}_{2}$ and $\mathcal{B}%
_{3}$ containing topologically non-singular and singular points. In this article, we present a topological proof of these results. 

Finally, we study some equisymmetric families of dimension 4, showing how both 
topologically singular and non-singular points appear in the branch loci of moduli spaces of these families.

Acknowledgements. We wish to thank the referees for comments and suggestions.

\section{Preliminaries}

\subsection{Uniformization of Riemann surfaces and automorphisms using
Fuchsian groups}

A Fuchsian group $\Delta$ is a discrete subgroup of $\mathrm{PSL}%
(2,\mathbb{R})$, i.e. the group $\mathrm{Isom}^{+}(\mathbb{H}^{2})$ of direct
isometries of $\mathbb{H}^{2}$. If $\mathbb{H}^{2}/\Delta$ is compact, the
algebraic structure of $\Delta$ is given by the signature $s=(h;m_{1}%
,...,m_{r})$, where $h$ is the genus of the quotient surface $\mathbb{H}%
^{2}/\Delta$ and the $m_{i}\,\ $are the branched indices of the covering
$\mathbb{H}^{2}\rightarrow\mathbb{H}^{2}/\Delta$ (the order of the isotropy
groups of the conic points of the orbifold $\mathbb{H}^{2}/\Delta$). The group
$\Delta$ admits a canonical presentation:
\[
\left\langle a_{i},b_{i};i=1,...,h;x_{j};j=1,...,r:x_{1}...x_{r}%
{\textstyle\prod\nolimits^{h}}
[a_{i}b_{i}]=x_{j}^{m_{j}}=1\right\rangle
\]

We shall consider only compact Riemann surfaces. A Riemann surface $X$ of
genus $g>1$, may be uniformized by a surface Fuchsian group, i.e.
$X=\mathbb{H}^{2}/\Gamma$, where $\Gamma$ is a Fuchsian group with signature
$(g;)$, surface group of genus $g$. The group $\Gamma$ is isomorphic to the
fundamental group of $X$.

When $g>1$, the group of automorphisms of the Riemann surface $X$ is a finite
group $\mathrm{Aut}(X)$. If $G\leq\mathrm{Aut}(X)$ the quotient orbifold $X/G$
is isomorphic to $\mathbb{H}^{2}/\Delta$, where $\Delta$ is a Fuchsian group
containing $\Gamma$ and such that $\Delta/\Gamma\cong G$.

If we have an $n$-fold covering $X=\mathbb{H}^{2}/\Gamma\rightarrow
\mathbb{H}^{2}/\Delta$, where $\Gamma$ is a surface genus $g$ Fuchsian group and
$\Delta$ has signature $(h;m_{1},...,m_{r})$, the following Riemann-Hurwitz
formula holds:%
\[
2g-2=n(2h-2+%
{\displaystyle\sum\nolimits^{r}}
(1-\frac{1}{m_{j}}))
\]

\subsection{Teichm\"{u}ller and moduli spaces}

Let $\mathcal{G}$ be an abstract group isomorphic to\textbf{\ }a\textbf{\ }%
Fuchsian group with signature $s$. Two representations $\alpha_{1}$ and
$\alpha_{2}$ of $\mathcal{G}$ in $PSL(2,\mathbb{R})$ are equivalent if there
is $\gamma\in\mathrm{PSL}(2,\mathbb{R})$ such that $\alpha_{1}(\zeta
)=\gamma\alpha_{2}(\zeta)\gamma^{-1}$, for all $\zeta\in\mathcal{G}$. 
The Teichm\"{u}ller space $\mathbb{T}_{s}$ is the space of
equivalence classes of representations $\rho$ of $\mathcal{G}$ in
$\mathrm{PSL}(2,\mathbb{R})$ such that $\rho(\mathcal{G})$ is a Fuchsian group
with signature $s$. This space with the topology induced by $\mathrm{PSL}%
(2,\mathbb{R})$ is homeomorphic to a ball of dimension%
\[
\dim\mathbb{T}_{(h;m_{1},...,m_{r})}=6h-6+2r
\]

Note that the group $\mathcal{G}$ is isomorphic to the orbifold fundamental group of
$\mathbb{H}^{2}/\rho(\mathcal{G)}$, where $[\rho]\in
\mathbb{T}_{s}$. If $s=(g;)$ the corresponding Teichm\"{u}ller space is noted
$\mathbb{T}_{g}$. Let $\mathrm{Mod}_{g}$ be the mapping class group of
surfaces of genus $g$. The group $\mathrm{Mod}_{g}$ acts by composition on
$\mathbb{T}_{g}$ and the quotient $\mathbb{T}_{g}/\mathrm{Mod}_{g}%
=\mathcal{M}_{g}$ is the moduli space of Riemann surfaces of genus $g$. Note
that $\mathcal{M}_{g} $ is, by construction, an orbifold and its universal
covering is $\Pi:\mathbb{T}_{g}\rightarrow\mathcal{M}_{g}$. The set of branch
values of the covering $\Pi$ is the branch locus $\mathcal{B}_{g}$ of the
orbifold $\mathcal{M}_{g}$. The branch locus $\mathcal{B}_{g}$ is the image by
$\Pi$ of the fixed points by finite subgroups of $\mathrm{Mod}_{g}$ and
represents in $\mathcal{M}_{g}$ the surfaces with non-trivial automorphism
group (up to the exception of $\mathcal{M}_{2}$, since $\mathcal{B}_{2}$
consists of surfaces having non-trivial automorphisms different from the
hyperelliptic involution).

Let $\theta:\mathcal{G}\rightarrow G$ be an epimorphism from the abstract
group $\mathcal{G}$ isomorphic to a Fuchsian group with signature
$s=(h;m_{1},...,m_{r})$ and such that $\ker\theta$ is isomorphic to a surface
group of genus $g$. There is a natural embedding $i_{\theta}:\mathbb{T}%
_{s}\rightarrow\mathbb{T}_{g}$. The image $\Pi(i_{\theta}(\mathbb{T}%
_{s}))\subset\mathcal{M}_{g}$ consists of the surfaces of genus $g $ having a
subgroup of their automorphism groups isomorphic to $G$ with a specific action
determined by $\theta$. We say that $\Pi(i_{\theta}(\mathbb{T}_{s}))$ is the
moduli space of an equisymmetric family given by $\theta$. If we consider the
subset $S_{s,\theta}$ of $\Pi(i_{\theta}(\mathbb{T}_{s}))$ consisting of the
surfaces whose full automorphism group is $G$, we obtain a stratification of
$\mathcal{B}_{g}$ by the sets $S_{s,\theta}$ which are called the
equisymmetric strata (see \cite{BR}).

\bigskip

\section{Topologically singular points in moduli space}

\begin{definition}
(Topological singular point) A point $X$ in $\mathcal{B}_{g}$ is topologically
singular if $X$ has not a neighbourhood in $\mathcal{M}_{g}$ homeomorphic to a
$(6g-6)$ ball.
\end{definition}

In other words:

\begin{definition}
(Rauch definition \cite{Rau}) A point $X$ in $\mathcal{B}_{g}$ is singular if
$X$ is not a manifold (or uniformizable) point in $\mathcal{M}_{g}$.
\end{definition}

\begin{theorem}
\label{FundLem} The group $\mathrm{Aut}(X)$, for $X$ in the branch loci of
$\mathcal{B}_{g}$, acts as a subgroup of $\mathrm{O}(6g-6)$ in $\mathbb{S}%
^{6g-7}$. The point $X$ is topologically non-singular if and only if
$\mathbb{S}^{6g-7}/\mathrm{Aut}(X)$ is homeomorphic to $\mathbb{S}^{6g-7}$.
\end{theorem}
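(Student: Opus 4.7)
The plan is to translate the local topology of $\mathcal{M}_{g}$ at $X$ into the quotient of a Euclidean ball by an orthogonal action of $\mathrm{Aut}(X)$, and then reduce the question of that quotient being a ball to whether the induced quotient of the bounding sphere is a sphere.

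First I would fix a lift $t\in\mathbb{T}_{g}$ with $\Pi(t)=X$. If $X=\mathbb{H}^{2}/\Gamma$ and $\Delta\geq\Gamma$ is a Fuchsian group with $\Delta/\Gamma\cong\mathrm{Aut}(X)$, each element of $\Delta/\Gamma$ descends to a mapping class fixing $t$, giving a canonical isomorphism between $\mathrm{Aut}(X)$ and the stabiliser of $t$ in $\mathrm{Mod}_{g}$. Because $\mathrm{Mod}_{g}$ acts properly discontinuously on $\mathbb{T}_{g}$, there exists an $\mathrm{Aut}(X)$-invariant open neighbourhood $U$ of $t$ such that $\Pi$ induces a homeomorphism $U/\mathrm{Aut}(X)\to\Pi(U)$; thus a neighbourhood of $X$ in $\mathcal{M}_{g}$ is modelled on the quotient of an open ball by $\mathrm{Aut}(X)$.

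Next I would apply Bochner's linearisation theorem: since $\mathrm{Aut}(X)$ is a finite group acting smoothly on the real analytic manifold $\mathbb{T}_{g}\cong\mathbb{R}^{6g-6}$ and fixing $t$, averaging any local chart over the group produces an equivariant diffeomorphism from a smaller $U$ onto a neighbourhood of $0$ in $T_{t}\mathbb{T}_{g}\cong\mathbb{R}^{6g-6}$ that intertwines the action with its differential representation. Averaging any positive definite inner product on the tangent space produces an $\mathrm{Aut}(X)$-invariant one, so the representation becomes a subgroup of $\mathrm{O}(6g-6)$, and its restriction to the unit sphere gives the announced action on $\mathbb{S}^{6g-7}$. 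This establishes the first assertion.

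A small closed invariant ball $\overline{B}\subset\mathbb{R}^{6g-6}$ is then radially homeomorphic to the cone $C\mathbb{S}^{6g-7}$, and by orthogonality of the action the quotient equals $C(\mathbb{S}^{6g-7}/\mathrm{Aut}(X))$; a neighbourhood basis of $X$ in $\mathcal{M}_{g}$ is therefore formed by such cones. If $\mathbb{S}^{6g-7}/\mathrm{Aut}(X)\cong\mathbb{S}^{6g-7}$ then this cone is a $(6g-6)$-ball, so $X$ is topologically non-singular. Conversely, if $X$ has a ball neighbourhood, deleting the distinguished point yields $(\mathbb{S}^{6g-7}/\mathrm{Aut}(X))\times\mathbb{R}\cong\mathbb{S}^{6g-7}\times\mathbb{R}$, whence $\mathbb{S}^{6g-7}/\mathrm{Aut}(X)\cong\mathbb{S}^{6g-7}$ by recognising both factors as the link of the removed point. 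The main technical step requiring care is the linearisation, which needs the $\mathrm{Mod}_{g}$-action to be smooth at $t$ for Bochner's theorem to apply; this follows from the real analytic structure on $\mathbb{T}_{g}$ inherited from its embedding in the character variety $\mathrm{PSL}(2,\mathbb{R})^{2g}/\mathrm{conjugation}$. The converse direction in the last step also deserves attention, since recovering $\mathbb{S}^{6g-7}/\mathrm{Aut}(X)$ as a topological invariant of the neighbourhood is subtle in general, but in our setting it is justified by the classical theory of cones on sphere quotients.
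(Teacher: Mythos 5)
Your proposal is correct and follows essentially the same strategy as the paper: pick a lift of $X$, use proper discontinuity of $\mathrm{Mod}_{g}$ to identify the stabiliser with $\mathrm{Aut}(X)$ acting on a small invariant ball whose quotient is a neighbourhood of $X$, realise that action as orthogonal, and reduce the ball-quotient question to the sphere-quotient question via the cone structure. The one genuinely different ingredient is how orthogonality is obtained: the paper argues that each $h\in\mathrm{Aut}(X)$ acts as an isometry of the Teichm\"{u}ller metric on $U$ and $\partial U$ and hence as an element of $\mathrm{O}(6g-6)$, whereas you invoke Bochner linearisation (equivariantly straightening the action to its differential representation and averaging an inner product). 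Your route is arguably the more careful one, since an isometry of a non-Riemannian metric on a ball is not literally an orthogonal map without some linearisation step; the conclusion is the same. On the converse implication of the equivalence (non-singular $\Rightarrow$ $\mathbb{S}^{6g-7}/\mathrm{Aut}(X)\cong\mathbb{S}^{6g-7}$) you are no less rigorous than the paper, which simply asserts the equivalence of ``$U/G$ is a ball'' and ``$\partial U/G$ is a sphere''; you at least flag that recovering the link of the cone point as a topological invariant is delicate, which is indeed the subtle point of the whole statement.
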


\begin{proof}
Let $X\in\mathcal{B}_{g}$ and $Y\in\Pi^{-1}(X)\subset\mathbb{T}_{g}$. Since
$\mathrm{Mod}_{g}$ acts discontinuously on $\mathbb{T}_{g}$, there is a
$(6g-6)$ ball $U\subset\mathbb{T}_{g}$, with center $Y$, such that
$h\in\mathrm{Mod}_{g}$ satisfies the condition $h(U)\cap U\neq\varnothing$ if
and only if $h$ fixes $Y$.

An element $h$\ of $\mathrm{Mod}_{g}$ such that $h(U)\cap U\neq\varnothing$ is
given by an automorphism of the Riemann surface represented by $Y$ in
$\mathbb{T}_{g}$, consequently by $X$ in $\mathcal{M}_{g}$, and we may 
identify $h$ with an element (still called $h$) of $\mathrm{Aut}(X)$. Under
such identification, $\mathrm{Aut}(X)$ acts on the ball $U$ and it follows,
since $h$ is an isometry of $\mathbb{T}_{g}$ (with the Teichm\"{u}ller
metric), that $h$ acts as an isometry on both $U$ and $\partial U$ ($\cong$
$\mathbb{S}^{6g-7}$); therefore, $\mathrm{Aut}(X)$ acts as a subgroup $G$ of
$\mathrm{O}(6g-6)$.

Now $\Pi(U)=U/\mathrm{Aut}(X)=U/G$ is a neighbourhood of $X$, so $X$ is
non-singular whenever $U/G$ is homeomorphic to the $(6g-6)$ ball or,
equivalently, whenever $\Pi(\partial U)\cong\mathbb{S}^{6g-7}/G$ is the sphere
$\mathbb{S}^{6g-7}$.
\end{proof}

The following theorem will be used to produce our proof of the main Theorem of \cite{Rau}.

\bigskip

\begin{theorem}
\label{TopCrit}Let $X\in\mathcal{B}_{g}$. If for each equisymmetric stratum $S$ such that $X\in\overline
{S}$, the codimension of $S$ is greater than $2$, then $X$ is topologically singular.
\end{theorem}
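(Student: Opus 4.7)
The plan is to apply Theorem \ref{FundLem} and show, under the codimension hypothesis, that the quotient $U/G$ is not a ball, where $G = \mathrm{Aut}(X)$ and $U$ is the $(6g-6)$-ball around some $Y \in \Pi^{-1}(X)$ on which $G$ acts orthogonally.

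First, I would translate the hypothesis into a statement about fixed subspaces of elements of $G$. For each non-trivial $\alpha \in G$, the sub-Teichm\"{u}ller space $\mathrm{Fix}(\alpha) \subset \mathbb{T}_g$ has some codimension $c_\alpha$, and its image $\Pi(\mathrm{Fix}(\alpha)) \subset \mathcal{M}_g$ contains an open dense equisymmetric stratum $S$ of the same dimension, corresponding to the ``generic'' full automorphism group $H \supseteq \langle\alpha\rangle$ for which $\mathrm{Fix}(H) = \mathrm{Fix}(\alpha)$. Since $X \in \Pi(\mathrm{Fix}(\alpha))$ we have $X \in \overline{S}$, and the hypothesis yields $\mathrm{codim}\, S = c_\alpha > 2$. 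Thus no non-trivial element of $G$ fixes a real codimension-$2$ subspace of $U$.

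Next, I would invoke the Chevalley--Shephard--Todd theorem. Because $\mathrm{Mod}_g$ acts holomorphically on $\mathbb{T}_g$, the local action of $G$ on $U$ is by complex-linear transformations, and the quotient $U/G$ is homeomorphic to $U$ if and only if $G$ is generated by complex pseudoreflections, that is, elements whose fixed subspace has complex codimension $1$ (equivalently, real codimension $2$). By the previous step $G$ contains no pseudoreflections, yet $G$ is non-trivial as $X \in \mathcal{B}_g$; hence $G$ cannot be generated by pseudoreflections and $U/G$ is not a ball. Theorem \ref{FundLem} then gives that $X$ is topologically singular.

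The main obstacle is the topological form of Chevalley--Shephard--Todd: its classical statement is algebraic or analytic, so one must either cite or reprove the converse direction in the purely topological category, typically via a Smith-theoretic analysis of the link $\mathbb{S}^{6g-7}/G$ or by computing its local fundamental group, which is forced to be non-trivial when every non-identity element of $G$ has fixed set of real codimension $\geq 4$. A secondary technical point is justifying the dimension-counting step that associates some stratum $S$ of codimension exactly $c_\alpha$ to each non-trivial $\alpha$, required because cyclic subgroups may fail to arise as full automorphism groups of any surface.
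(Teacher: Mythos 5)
Your strategy is genuinely different from the paper's. The paper never linearizes over $\mathbb{C}$ nor mentions pseudoreflections: it studies the branched covering $\partial U\rightarrow\Pi(\partial U)$ of the link, notes that the complement of the branch locus in $\Pi(\partial U)$ carries a connected covering with $\left\vert \mathrm{Aut}(X)\right\vert>1$ sheets and hence has non-trivial fundamental group, and then uses a general-position theorem (removing a subpolyhedron of codimension $>2$ from a manifold does not change $\pi_{1}$) to conclude that $\Pi(\partial U)$ is either non-simply-connected or not a manifold, so not a sphere. Your first step, translating the hypothesis into ``no non-trivial $\alpha\in G$ has fixed set of real codimension $2$ in $U$,'' is sound modulo the non-maximal-signature caveat you flag yourself.

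The gap is in the second step, and it is worse than the technical point you anticipate: the ``only if'' direction of Chevalley--Shephard--Todd is \emph{false} in the topological category, so it can be neither cited nor reproved. Let $G=2I\subset SU(2)$ be the binary icosahedral group acting on $\mathbb{C}^{2}\oplus\mathbb{C}^{m}$ ($m\geq1$), standardly on the first factor and trivially on the second. Every non-trivial element has fixed set $\{0\}\oplus\mathbb{C}^{m}$, of real codimension $4>2$, and $G$ contains no pseudoreflections; yet the link quotient $\mathbb{S}^{2m+3}/G$ is the $2m$-fold suspension of the Poincar\'{e} sphere $\mathbb{S}^{3}/G$, hence homeomorphic to $\mathbb{S}^{2m+3}$ by the Cannon--Edwards double suspension theorem, and the quotient of the ball is a ball. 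Your proposed repairs fail on this example for instructive reasons. By Armstrong's theorem, $\pi_{1}(\mathbb{S}^{n-1}/G)\cong G/H$ with $H$ the normal closure of the elements having fixed points on the sphere, so the link quotient itself is simply connected whenever every element of $G$ fixes a point of $\partial U$; the group that is genuinely non-trivial is $\pi_{1}$ of the \emph{complement of the branch locus} in the link quotient, and passing from that to ``the link quotient is not a sphere'' requires exactly the general-position step the paper invokes --- a step that is delicate precisely because in the double-suspension examples the branch locus is wildly embedded. Closing your gap therefore forces you back to an argument of the paper's type (or to C.~Lange's recent classification of finite orthogonal groups with manifold quotient); it cannot be done from the codimension hypothesis on an abstract linear group alone.
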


\begin{proof}
Let $\Pi:\mathbb{T}_{g}\rightarrow\mathcal{M}_{g}$ be the covering given by
the action of $\mathrm{Mod}_{g}$. 

Let $ \mathcal{S} $ the set of equisymmetric strata $S$ such that $X\in\overline{S}$.
Let $U$ be a ball containing a point of
$\Pi^{-1}(X)$ and such that $\Pi(U)$ does not cut
$\mathcal{B}_{g}$ but on the strata in $ \mathcal{S} $. We
shall show that if $\mathrm{codim}(S)>2$ for all $ S\in \mathcal{S}$ then $U/\mathrm{Aut}(X)$ is
not a ball.

We consider the covering $p=\Pi\mid_{\Pi^{-1}(\partial U)}:\partial
U\rightarrow\Pi(\partial U)$ with branch locus $\Pi(\partial U)\cap
\mathcal{B}_{g}=\Pi(\partial U)\cap (\cup_{\mathcal{S}} S)$. There is a finite triangulation of
$U$ in such a way that $\mathrm{Fix}(\mathrm{Aut}(X))$ is a subpolyhedron and the action of $\mathrm{Aut}(X)$ preserves the
triangulation (note that $\mathrm{Aut}(X)$ acts as a finite order rotation group of $\mathrm{O}(6g-6)$). Since $\mathrm{codim}(S)>2$, for all $ S\in \mathcal{S}$, the
codimension of the polyhedron $\Pi(\partial U)\cap\mathcal{B}_{g}=\Pi(\partial U)\cap (\cup_{\mathcal{S}} S)$ is greater than 2 in $\Pi(\partial U)$. If $\Pi(\partial U)$ is a manifold then $\pi_{1}(\Pi(\partial U)-\Pi(\partial
U)\cap\mathcal{B}_{g})\cong\pi_{1}(\Pi(\partial U))$ (see \cite{G} Theorem 2.3, page 146). If $g>2$, the covering
$p$ has $\left\vert Aut(X)\right\vert \neq1$ sheets (in case $g=2$ the covering $p$ has
$\left\vert Aut(X)/\left\langle h\right\rangle \right\vert \neq1$ sheets,
where $h$ is the hyperelliptic involution) and $p$ is branched on
$\Pi(\partial U)\cap\mathcal{B}_{g}$. In both cases $\pi_{1}(\Pi(\partial
U)-\Pi(\partial U)\cap\mathcal{B}_{g})$ must be not trivial. Hence either 
$\pi _{1}(\Pi (\partial U))\neq 1$ or $\Pi(\partial U)$
is not a manifold and, in both cases, $\Pi (\partial U)$ cannot be homeomorphic to the sphere $\mathbb{S}^{6g-7}$; 
therefore, $X$ is topologically singular.

\end{proof}

\begin{corollary}\label{Isol}
Let $X\in\mathcal{B}_{g}$ and suppose that $X$ is
isolated in $\mathcal{B}_{g}$ (see \cite{Kul}) then $X$ is topologically singular.
\end{corollary}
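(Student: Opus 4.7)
The plan is to derive this as an immediate consequence of Theorem \ref{TopCrit}, so the entire task reduces to computing the codimensions of the strata whose closures contain $X$. First I would unpack the notion of isolated point in the sense of \cite{Kul}: there exists an open neighbourhood $V$ of $X$ in $\mathcal{M}_g$ with $V\cap\mathcal{B}_g=\{X\}$. Since every equisymmetric stratum $S\subset\mathcal{B}_g$ is locally closed, any stratum with $X\in\overline{S}$ must accumulate on $X$, so $S\cap V\neq\emptyset$; as $S\subset\mathcal{B}_g$, this forces $S\cap V=\{X\}$ and hence $S=\{X\}$.

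Consequently, the family $\mathcal{S}$ appearing in Theorem \ref{TopCrit} consists of a single element, namely the zero-dimensional stratum $\{X\}$. Its codimension in $\mathcal{M}_g$ equals $\dim\mathcal{M}_g=6g-6$, and since we assume $g\geq 2$ we have $6g-6\geq 6>2$. The hypothesis of Theorem \ref{TopCrit} is therefore satisfied and we conclude at once that $X$ is topologically singular.

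There is essentially no obstacle here beyond making precise the identification of the unique stratum at $X$; the only point worth double-checking is that the stratum containing $X$ is genuinely $\{X\}$ and that no higher-dimensional stratum has $X$ in its closure, which both follow from the isolation hypothesis as outlined above. No new machinery beyond Theorem \ref{TopCrit} is needed.
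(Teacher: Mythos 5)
Your proof is correct and follows the same route as the paper: identify $\{X\}$ as the unique equisymmetric stratum whose closure contains $X$, note its codimension $6g-6>2$, and apply Theorem \ref{TopCrit}. You merely spell out in more detail (via the neighbourhood $V$ with $V\cap\mathcal{B}_g=\{X\}$) why no other stratum can have $X$ in its closure, which the paper asserts without elaboration.
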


\begin{proof}
If $X$ is isolated then $\lbrace X \rbrace$ is the unique equisymmetric stratum $S$ such that $X\in\overline
{S}$ and $\lbrace X \rbrace$ has dimension $0$ (codimension greater than $2$). 
\end{proof}

\begin{remark}
Note that if $X$ is an isolated point of $\mathcal{B}_{g}$ and $U$ is a
ball in $\mathbb{T}_{g}$ containing a point of $\Pi^{-1}(X)$ such that $\Pi(U)\cap\mathcal{B}_{g}=\{X\}$, 
the covering $\partial U\rightarrow\Pi(\partial U)$ is a regular
unbranched covering with deck transformation group $\mathrm{Aut}(X)$, so $\Pi(\partial U))$ is a manifold and 
$\pi_{1}(\Pi(\partial U)) \cong \mathrm{Aut}(X)$.
\end{remark}

\bigskip

\begin{theorem}
\label{MainRauch}For $g\geq4$, every point in $\mathcal{B}_{g}$ is
topologically singular.
\end{theorem}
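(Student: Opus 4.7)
The plan is to invoke Theorem \ref{TopCrit}: it suffices to show that, for $g \geq 4$, every equisymmetric stratum $S \subset \mathcal{B}_g$ has codimension greater than $2$ in $\mathcal{M}_g$. In fact I aim for the uniform bound $\mathrm{codim}(S) \geq 2g - 4$, which is $\geq 4 > 2$ whenever $g \geq 4$, and then the conclusion is immediate from Theorem \ref{TopCrit} for every $X \in \mathcal{B}_g$.

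A stratum of signature $s = (h; m_1, \ldots, m_r)$ has real dimension $6h - 6 + 2r$, hence codimension $6(g - h) - 2r$ in $\mathcal{M}_g$; in particular this codimension is always even, so ``codimension $> 2$'' means ``codimension $\geq 4$''. To upper bound $\dim S$ I would combine the Riemann--Hurwitz identity $2g - 2 = n\bigl(2h - 2 + \sum_{j=1}^{r}(1 - 1/m_j)\bigr)$, where $n = |G| \geq 2$, with the elementary estimate $\sum (1 - 1/m_j) \geq r/2$ (valid because every $m_j \geq 2$). This yields $r \leq (4g-4)/n - 4h + 4$ and, after substituting into the dimension formula, $\dim S \leq -2h + 2 + (8g-8)/n \leq 4g - 2$, with equality precisely for $n = 2$, $h = 0$, all $m_j = 2$, i.e.\ the hyperelliptic locus (where $r = 2g + 2$). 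Therefore $\mathrm{codim}(S) \geq 2g - 4$.

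The only conceptual content is the dimension estimate, which is really the statement that the hyperelliptic locus is the top-dimensional stratum of $\mathcal{B}_g$, and this is where the cut-off $g \geq 4$ enters: for $g = 2$ the hyperelliptic ``stratum'' is all of $\mathcal{M}_2$ (codimension $0$) and for $g = 3$ it has codimension $2$, so Theorem \ref{TopCrit} genuinely fails in those low-genus cases and a finer analysis (of the types discussed elsewhere in the paper, following \cite{Rau}) is required. For $g \geq 4$, however, the bound is clean and the main theorem follows in one line from Theorem \ref{TopCrit}. The main obstacle in writing this up is making sure the combinatorial bound is sharp uniformly over all $(n, h, m_1, \ldots, m_r)$ satisfying Riemann--Hurwitz; the trick $\sum(1 - 1/m_j) \geq r/2$ is what makes the argument work without a case analysis on $n$.
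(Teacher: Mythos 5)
Your proof is correct and follows essentially the same route as the paper: invoke Theorem \ref{TopCrit} after bounding the dimension of each equisymmetric stratum via Riemann--Hurwitz together with the estimate $\sum (1-1/m_j) \geq r/2$, which yields $\mathrm{codim}(S) \geq 2g-4 > 2$ for $g \geq 4$. The only (harmless) difference is that you bound the strata of arbitrary finite group actions directly, whereas the paper first reduces to strata defined by prime-order cyclic actions and obtains the same bound.
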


\begin{proof}
Note that every stratum of $\mathcal{B}_{g}$ is contained in the closure
$\overline{S}_{k}$ of some equisymmetric stratum defined by the action of a
prime order automorphism (for instance see \cite{C}, \cite{CI}, \cite{CIP}). We shall then study the dimension of such strata. 
Note that in some cases the full group of automorphisms of the surfaces in $\overline{S}_{k}$ 
may contain strictly the cyclic group generated by the
prime order automorphism. 

Assume that $\overline{S}_{k}$ is the closure of an equisymmetric stratum of
$\mathcal{B}_{g}$ defined by a cyclic subgroup $C_{k}$ of $\mathrm{Mod}_{g}$
of prime order $k$ such that the corresponding action on surfaces has $r$ fixed points. Let $h$ be the genus of all the quotients
of the surfaces in $S_{k}$ by the action of the group $C_{k}$. The
Riemann-Hurwitz formula gives:%
\[
2g-2=k(2h-2+(%
{\displaystyle\sum\nolimits^{r}}
(1-\frac{1}{k}))
\]
Since $k\geq2$ then
\[
2g-2\geq2(2h-2+r/2)
\]
and $r\leq2g-4h+2$. Hence the dimension of $S_{k}$ is smaller than $4g-2h-2$.
Supposing the codimension of $S_{k}$ is two, we have $6g-8=4g-2h-2$ yielding
$g=2$ or $3$, which contradicts our hypothesis. We thus conclude that the codimension of
$S_{k}$ is greater than $2$ and Theorem \ref{TopCrit}\ completes the proof.
\end{proof}

\begin{remark}
The proof in \cite{Rau} of the Theorem \ref{MainRauch} is based on a Theorem
of Zariski \cite{Z}.
\end{remark}

\bigskip

Let us now study the topologically singular points of the moduli space of
surfaces genus 2 and 3. 

We shall say that an equisymmetric stratum $S$ is maximal if $S\not\subset \overline{S'}$ for every equisymmetric stratum $S'$ with $S\neq S' $. We need the following corollary of Theorem \ref{TopCrit}:

\begin{corollary}\label{Cor} 
Let $X\in\mathcal{B}_{g}$ and suppose that $X\in\overline
{S}$ where $S$ is a maximal equisymmetric stratum of codimension greater than $2$. Then $X$ is topologically singular.
\end{corollary}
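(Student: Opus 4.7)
The plan is to reduce the statement to Theorem \ref{TopCrit}, which already yields topological singularity of $X$ as soon as every equisymmetric stratum $S'$ with $X\in\overline{S'}$ has codimension greater than $2$. The hypothesis supplies this codimension bound only for the single maximal stratum $S$, so the heart of the argument is to propagate the bound to every other equisymmetric stratum whose closure meets $X$.

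First I would recall that the equisymmetric strata of $\mathcal{B}_{g}$ are partially ordered by closure inclusion, and that this order reflects inclusions of automorphism groups: if $\overline{S'}\subseteq\overline{S''}$ then $\dim S'\le\dim S''$, and strata lower in the order correspond to surfaces whose automorphism groups properly extend those associated with the strata above them. Maximality of $S$ then means that the subgroup of $\mathrm{Aut}(X)$ realising $S$ near $X$ is minimal among the subgroups that actually appear at $X$. Any other equisymmetric stratum $S'$ with $X\in\overline{S'}$ is, locally at $X$, governed by a larger subgroup of $\mathrm{Aut}(X)$, so it specialises into $\overline{S}$; that is, $\overline{S'}\subseteq\overline{S}$.

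With this inclusion in hand, $\dim S'\le\dim S$ and hence $\mathrm{codim}(S')\ge\mathrm{codim}(S)>2$ for every equisymmetric stratum $S'$ with $X\in\overline{S'}$. Theorem \ref{TopCrit} then applies and shows that $X$ is topologically singular. The main obstacle is the middle step: making precise the local identification of equisymmetric strata through $X$ with subgroups of $\mathrm{Aut}(X)$, and deducing $\overline{S'}\subseteq\overline{S}$ from the maximality of $S$. Once this local description of the branch locus near $X$ is in place, the reduction to Theorem \ref{TopCrit} is routine.
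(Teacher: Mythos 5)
Your reduction to Theorem \ref{TopCrit} applied directly at $X$ has a genuine gap in the middle step. It is not true that every equisymmetric stratum $S'$ with $X\in\overline{S'}$ satisfies $\overline{S'}\subseteq\overline{S}$. The strata whose closures contain $X$ correspond to the various subgroups of $\mathrm{Aut}(X)$ (with their topological actions); the maximality of $S$ only rules out strata coming from \emph{proper subgroups} of the group $G$ defining $S$ (those would have closures strictly containing $\overline{S}$). It does not rule out strata coming from subgroups of $\mathrm{Aut}(X)$ that are incomparable with $G$, and such strata can perfectly well have codimension $\le 2$. This is not a hypothetical worry: in the paper's own application (genus $3$), a point $X\in\overline{S_{2}^{(2)}}$ (involution with $4$ fixed points, a maximal stratum of codimension $4$) may also be hyperelliptic, hence lie in the closure of the hyperelliptic stratum $S_{2}^{(1)}$, which has codimension exactly $2$. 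For such $X$ the hypothesis of Theorem \ref{TopCrit} fails, so your argument breaks down precisely in the cases for which the corollary is needed.

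The paper's proof avoids this by applying Theorem \ref{TopCrit} not to $X$ but to the points $Y$ in the open stratum $S$ itself: for $Y\in S$ the full automorphism group is exactly $G$, so by maximality $S$ is the \emph{unique} stratum whose closure contains $Y$, and the codimension hypothesis of Theorem \ref{TopCrit} is satisfied; hence every $Y\in S$ is topologically singular. One then passes to $X\in\overline{S}$ by observing that the set of manifold points (points with a neighbourhood homeomorphic to a ball) is open, so the set of topologically singular points is closed and therefore contains $\overline{S}$. If you want to salvage your write-up, replace the false inclusion $\overline{S'}\subseteq\overline{S}$ by this two-step argument: singularity on $S$, then closedness of the singular locus.
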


\begin{proof}
Let $X\in\overline
{S}$ where $S$ is a maximal equisymmetric stratum of codimension greater than $2$. By Theorem \ref{TopCrit}, each point $ Y \in S $ is singular since $Y$ is the unique stratum containing $Y$. Since $X\in\overline{S}$ we have $X$ is singular. 
\end{proof}

First we consider the case $g=3$.

\begin{theorem}

The points of $\mathcal{B}_{3}$ corresponding to surfaces having an
automorphism different from the hyperelliptic involution are topologically
singular, while the points of $\mathcal{B}_{3}$ corresponding to surfaces having
only the hyperelliptic involution are non-singular.
\end{theorem}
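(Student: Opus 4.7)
The plan is to split $X$ into three cases according to $\mathrm{Aut}(X)$: (a) $X$ hyperelliptic with $\mathrm{Aut}(X)=\langle\iota\rangle$; (b) $X$ non-hyperelliptic with a non-trivial automorphism; (c) $X$ hyperelliptic with $\mathrm{Aut}(X)\supsetneq\langle\iota\rangle$. For (a) I apply Theorem \ref{FundLem}: the fixed set of $\iota$ in $\mathbb{T}_{3}$ is the Teichm\"uller space of signature $(0;2,2,2,2,2,2,2,2)$, of real dimension $10$, so $\iota$ acts on $T_{Y}\mathbb{T}_{3}\cong\mathbb{R}^{12}$ as the identity on a $10$-dimensional subspace $V_{+}$ and, being an involution with no further fixed vectors, as $-I$ on the complementary $V_{-}\cong\mathbb{R}^{2}$. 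Since $V_{-}/\{\pm I\}\cong\mathbb{R}^{2}$ through $z\mapsto z^{2}$, the local model $\mathbb{R}^{12}/\langle\iota\rangle$ is a ball and $\mathbb{S}^{11}/\langle\iota\rangle\cong\mathbb{S}^{11}$, so $X$ is non-singular.

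For case (b), the hyperelliptic locus is closed in $\mathcal{M}_{3}$, so $X$ lies outside it and every equisymmetric stratum $S$ with $X\in\overline{S}$ corresponds to a subgroup acting non-hyperelliptically, with quotient genus $h\geq 1$. The Riemann-Hurwitz estimate in the proof of Theorem \ref{MainRauch} then gives $\dim S\leq 4g-2h-2\leq 6$, i.e.\ real codimension at least $6>2$, and Theorem \ref{TopCrit} concludes that $X$ is topologically singular.

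Case (c) is the main obstacle, because $X$ lies in the closure of the codimension-two hyperelliptic stratum and so Theorem \ref{TopCrit} does not apply directly. My plan is a two-step descent of the covering $\partial U\to\Pi(\partial U)$ through the intermediate quotient $\partial U/\langle\iota\rangle$, which by case (a) is homeomorphic to $\mathbb{S}^{11}$. Since $\iota$ is the unique hyperelliptic involution of $X$, $\iota$ is central in $\mathrm{Aut}(X)$, so the induced map $\partial U/\langle\iota\rangle\to\Pi(\partial U)$ is a regular branched covering of degree $|\mathrm{Aut}(X)/\langle\iota\rangle|>1$ whose branch locus is the image of the fixed sets of elements of $\mathrm{Aut}(X)\setminus\langle\iota\rangle$. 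Every such element acts non-hyperelliptically on $X$ (as $\iota$ is the only hyperelliptic involution), so by the Riemann-Hurwitz analysis from case (b) its fixed set in $\mathbb{T}_{3}$ has real codimension at least $4$, and this persists after the quotient by $\langle\iota\rangle$. Assuming $\Pi(\partial U)\cong\mathbb{S}^{11}$, then by \cite{G} Theorem 2.3 both $\Pi(\partial U)$ minus its branch locus and $\partial U/\langle\iota\rangle\cong\mathbb{S}^{11}$ minus the preimage would be simply connected; but a connected covering of a simply connected base by a simply connected total space has degree $1$, contradicting $|\mathrm{Aut}(X)/\langle\iota\rangle|>1$. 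Hence $\Pi(\partial U)\not\cong\mathbb{S}^{11}$ and $X$ is topologically singular.
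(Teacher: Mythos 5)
Your three-case decomposition parallels the paper's strategy (local linear model for the purely hyperelliptic stratum, Theorem \ref{TopCrit} for the rest, and a two-step descent through $\partial U/\langle\iota\rangle$ for hyperelliptic surfaces with extra automorphisms), and case (c) is in fact a nice uniform version of the argument the paper applies only to the $C_{4}$ square-root-of-hyperellipticity stratum; it lets you bypass the paper's appeal to maximality of the signatures $(1;2,2,2,2)$ and $(0;3,3,3,3,3)$ via Corollary \ref{Cor}. However, case (b) contains a genuine error: the claim that every stratum through a non-hyperelliptic point has quotient genus $h\geq 1$ is false. Non-hyperelliptic genus-$3$ surfaces do admit automorphisms with genus-$0$ quotient: a cyclic trigonal quartic has an order-$3$ automorphism with $5$ fixed points and quotient $\mathbb{P}^{1}$, and the Klein quartic has an order-$7$ automorphism with $3$ fixed points and quotient $\mathbb{P}^{1}$. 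Only for involutions does ``quotient genus $0$'' force hyperellipticity. This matters because your bound $\dim S\leq 4g-2h-2$ gives only $\dim S\leq 10$, i.e.\ codimension $\geq 2$, when $h=0$, which is not enough for Theorem \ref{TopCrit}. The gap is easily closed by computing these strata directly: for $(0;3,3,3,3,3)$ one gets $\dim=6\cdot 0-6+2\cdot 5=4$ and for $(0;7,7,7)$ one gets $\dim=0$, both of codimension greater than $2$; but as written your argument does not cover them. The same false premise is imported into case (c) when you assert codimension $\geq 4$ for fixed sets of non-hyperelliptic elements ``by the Riemann--Hurwitz analysis from case (b)''; again the conclusion is correct (for involutions $h\geq1$ genuinely holds, and the $h=0$ strata of order $3$, $4$ and $7$ have codimension $8$ or more), but it needs the direct count, not the $h\geq1$ claim.

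Apart from this, the logic is sound: case (a) matches the paper's Type A computation (the $+1$-eigenspace of $\iota$ has dimension $10=\dim\mathbb{T}_{(0;2,\dots,2)}$, so $\iota$ is a rotation about a codimension-$2$ axis and the quotient of the ball is a ball); and the covering-space contradiction in case (c) — a degree $>1$ connected covering between simply connected spaces obtained by deleting codimension $\geq 3$ branch loci — is exactly the mechanism of Theorem \ref{TopCrit} and of the paper's treatment of the $C_{4}$ stratum. With the $h=0$ strata dimensions inserted explicitly, your proof is complete and slightly more economical than the paper's stratum-by-stratum enumeration.
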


\begin{proof}

The orders of prime order automorphisms of surfaces of genus $3$ are $2,3$ and $7$ (see \cite{BCIP}). Let $S_{k}^{(i)}$ be the equisymmetric strata corresponding to genus $3$ surfaces where $C_{k}$ acts in a determined topological way, $k=2,3,7$.
Let us denote $\overline{S}_{2}^{(1)}$ the hyperelliptic
locus. Every point in $\mathcal{B}_{3}$ is in the closure of some $S_{k}^{(i)}$.

Order 2: there are three topological types of automorphisms.

Type A: the hyperelliptic involution with $8$ fixed points and quotient of genus
$0$. The stratum $S_{2}^{(1)}$ corresponding to this topological type of action has
dimension $6\times0-6+2\times8=10$ (codimension two in $\mathcal{M}_{3}$). Each point in $S_{2}^{(1)}$ has a neighbourhood that is homeomorphic
to the quotient space of a ball by the action of an order two rotation with fixed point set of codimension $2$, then the points in  $S_{2}^{(1)}$ are not
topologically singular. 

Type B: the involution has $4$ fixed points and the quotient has genus $1$. In this
case the stratum $S_{2}^{(2)}$ has dimension $6\times1-6+2\times4=8$
(codimension $>2$). Or the signature $(1; 2,2,2,2)$ is maximal (see for instance \cite{CIP}), then this stratum is maximal.
By Corollary \ref{Cor} all the points in $\overline{S}_{2}^{(2)}$ are singular.

Type C: for this type of automorphisms there are no fixed points and the quotient
surface has genus $2$. This stratum is not maximal since all Riemann surfaces of genus three with a fixed point free involution are hyperelliptic and as well admit an involution with four fixed points (the signature of Fuchsian groups determining this stratum is not maximal, see \cite{CIP}). Hence $S_{2}^{(3)}$ is contained in the closure of $S_{2}^{(2)}$
and all its points are singular.

Order 3: there are two topological types of automorphisms of order three.

Type A: two fixed points and genus of quotient $1$. This stratum $S_{3}^{(1)}$ is not maximal and it is contained in $\overline{S}_{2}^{(2)}$,
so all the points corresponding to surfaces with this type of action are singular.  

Type B: five fixed points and genus of quotient $0$. The dimension of this stratum $S_{3}^{(2)}$
is $6\times0-6+10=4$ (codimension $>2$). The stratum $S_{3}^{(2)}$ is maximal and all the points in the closure of $S_{3}^{(2)}$ are singular.  

Order 7: There are two points: $K$ and $W$. The point $K$ corresponds to the Klein quartic and the point $W$ to the Wiman's curve of type I in genus 3  (see \cite{Wi}). The surface $K$ admits order three automorphisms, then it is in $\overline{S}_{3}^{(i)}$ and $K$ is a singular point. The point $W$ is in $\overline{S}_{2}^{(1)}$ with $\mathrm{Aut}(W)=C_{14}$ and it corresponds to the curve of equation $y^{2}=x^{7}-1$. If $U$ is a neighbourhood of $W'$, where $W'$ is in the preimage of $W$ in Teichm\"{u}ller space, and $\mathrm{Aut}%
(X)=C_{14}=\left\langle t:t^{14}=1\right\rangle $, we have that $\partial
U/\left\langle t^{2}\right\rangle $ is the sphere or $\partial U/\left\langle
t^{2}\right\rangle \rightarrow\partial U/\left\langle t\right\rangle $ is an unbranched cyclic
7-fold covering; then $\partial U/\left\langle t\right\rangle $ is not simply connected and $W$
is topologically singular.

Finally, we must consider the points in $\overline{S}_{2}^{(1)}$ but not in any $S_{k}^{(i)}$. 
These points correspond to hyperelliptic surfaces having an automorphism of order $4$ that is a square root of the hyperellipticity. Let $S_{4}\subset\overline{S}_{2}^{(1)}$ be
the stratum corresponding to surfaces with full automorphism group $C_{4}$. The codimension of $S_{4}$ is greater than two 
($\dim S_{4} = 6 \times 0 - 6 + 2 \times 5 = 4$). Let $X$ be a point in $S_{4}$ and $Y\in\Pi^{-1}(X)$. Let
$U$ be a neighbourhood of $Y$ in $\mathbb{T}_{g}$ where $\mathrm{Aut}%
(X)=C_{4}=\left\langle t:t^{4}=1\right\rangle $ acts. We have that $\partial
U/\left\langle t^{2}\right\rangle $ is the sphere but $\partial U/\left\langle
t^{2}\right\rangle \rightarrow\partial U/\left\langle t\right\rangle $ is a
2-fold covering branched on a subpolyhedra of codimension $>2$; then, by a
similar argument to the one in the proof of Theorem \ref{TopCrit}, we prove
that $\partial U/\left\langle t\right\rangle $ is not simply connected and $X$
is thus topologically singular. Hence all the points in $\overline{S}_{4}$ are singular.

\end{proof}

\bigskip

Finally we consider the case $g=2$. In this case, using our approach we cannot give
a complete description of the topological singular points in $\mathcal{B}_{2}$.

\begin{theorem}
The points in the stratum $S_{2}$ of $\mathcal{B}_{2}$ corresponding to surfaces having
full automorphism group $C_{2} \times C_{2}$ are not topologically singular.
The isolated point of $\mathcal{B}_{2}$ corresponding to the Kulkarni surface $y^{2}=x^{5}-1$ is singular. 
\end{theorem}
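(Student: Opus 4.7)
The plan is to handle the two claims separately, both via the criterion in Theorem \ref{FundLem}, using the fact that in genus $2$ the hyperelliptic involution $\iota$ acts trivially on $\mathbb{T}_2$, so the effective action of $\mathrm{Aut}(X)$ on a small Teichm\"uller ball $U$ about a preimage of $X$ reduces to the action of $\mathrm{Aut}(X)/\langle\iota\rangle$ (this is why the paper defines $\mathcal{B}_2$ modulo the hyperelliptic involution, and it is what makes the $g=2$ case of the proof of Theorem \ref{TopCrit} have $|\mathrm{Aut}(X)/\langle h\rangle|$ sheets).

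For the first statement, I would first pin down the topological action. A Riemann--Hurwitz count for $C_2\times C_2$ acting on a genus $2$ surface forces the signature $(0;2,2,2,2,2)$: the only other numerical possibility is $(1;2)$, which is excluded because the cone-point generator would be a commutator, hence trivial, in $C_2\times C_2$. Writing $\mathrm{Aut}(X)=\langle \iota,\tau\rangle$ with $\tau$ non-hyperelliptic, one computes that $\tau$ has signature $(1;2,2)$, so its fixed locus in $\mathbb{T}_2$ has dimension $4$, and thus $\tau$ acts on the $6$-ball $U$ as a rotation by $\pi$ in a $2$-plane, fixing a $4$-dimensional complementary subspace. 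The key topological step is then the join decomposition $\partial U=\mathbb{S}^5=\mathbb{S}^3\ast \mathbb{S}^1$, on which $\tau$ is trivial on $\mathbb{S}^3$ and antipodal on $\mathbb{S}^1$; hence
\[
\mathbb{S}^5/\langle \tau\rangle \;=\; \mathbb{S}^3 \ast (\mathbb{S}^1/\pm 1) \;=\; \mathbb{S}^3 \ast \mathbb{S}^1 \;=\; \mathbb{S}^5,
\]
and Theorem \ref{FundLem} yields that $X$ is not topologically singular.

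For the second statement, the Kulkarni surface $y^2=x^5-1$ has full automorphism group of order $10$ and, as shown in \cite{Kul}, the corresponding point of $\mathcal{B}_2$ is isolated; Corollary \ref{Isol} then immediately gives that it is topologically singular. The only genuinely topological input in the whole argument is the join identification above; everything else is dimension counting for equisymmetric strata and a direct appeal to previously established results in the paper.
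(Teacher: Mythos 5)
Your proposal is correct and follows essentially the same route as the paper: the first claim is Theorem \ref{FundLem} applied to the order-two rotation with codimension-two fixed set (the paper simply asserts that the quotient ball is a ball, while you make this explicit via the join decomposition $\mathbb{S}^{5}=\mathbb{S}^{3}\ast\mathbb{S}^{1}$), and the second claim is exactly the paper's appeal to Kulkarni's isolatedness result together with Corollary \ref{Isol}. Your additional verifications (the signature $(0;2,2,2,2,2)$ for the $C_{2}\times C_{2}$ action, the signature $(1;2,2)$ of the non-hyperelliptic involution, and the reduction to the effective action of $\mathrm{Aut}(X)/\langle\iota\rangle$) are accurate and fill in details the paper leaves implicit.
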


The Kulkarni surface $y^{2}=x^{5}-1$ is also known as Wiman's curve of type I in genus 2 (see \cite{Wi}).  

\begin{proof}
First note that $\dim\mathcal{M}_{2}=6\times2-6=6$.

By Theorem \ref{FundLem}, the points of $S_{2}$ have a
neighbourhood $U$ that is homeomorphic to the quotient of a ball $B$ by a
rotation of order two having as fixed point set a linear subspace of
codimension two (intersection with $B$). Hence, $U$ is a ball and the points in
$S_{2}$ are not singular.

The stratum $S_{5}$ (surfaces with an order $5$ automorphism) has a single point and it is an
isolated point of $\mathcal{B}_{2}$ (see \cite{Kul}). By Corollary \ref{Isol} this point is singular.

\end{proof}
\bigskip
\begin{remark}
The points in $\mathcal{B}_{2}$ different from Kulkarni isolated surface and
the surfaces in $S_{2}$ are in strata completely included in
$\overline{S_{2}}$ (then these strata are non-maximal) and our methods do not provide information
on the singularity of such points.
\end{remark}

\section{Singular points of equisymmetric families}

In the case of (real) dimension two equisymmetric families, as for all two
dimensional orbifolds, the points in the branch locus are not topologically
singular. We shall show that in families of greater dimensions the points in
the singular locus may be either topologically singular or non-singular. In
that direction, let us study the singular points of some equisymetric families
of (real) dimension four.

\begin{example}
The families $\mathcal{W}_{q,w}$ consist of Riemann surfaces that are $q\times
w-$fold cyclic coverings of the sphere branched on five points, where $q,w>5$
are prime integers $q\neq w$. The type of coverings defining the families
$\mathcal{W}_{q,w}$ will be described now.

Let $\mathbb{T}_{(0;q,q,q,qw,w)}$ be the Teichm\"{u}ller space of groups
$\Delta$ with signature $(0;q,q,q,qw,w)$, and
\[
\left\langle x_{i},i=1,...,5:x_{i}^{q}=1,\text{ }i=1,2,3\text{, }x_{4}%
^{qw}=x_{5}^{w}=1\text{ and }x_{1}...x_{5}=1\right\rangle
\]
be a canonical presentation for these Fuchsian groups. The surfaces of the
family $\mathcal{W}_{q,w}$ are uniformized by the surface groups in the
kernel of the epimorphism $\theta:\Delta\rightarrow C_{qw}=\left\langle
l:l^{qw}=1\right\rangle $, given by $\theta(x_{i})=l^{w}$, $i=1,2,3$ and
$\theta(x_{4})=l^{-3w}l^{q}$, $\theta(x_{5})=l^{-q}$. The inclusion
$\ker\theta\subset\Delta$ induces an embedding $e:\mathbb{T}_{(0;q,q,q,qw,w)}%
\rightarrow\mathbb{T}_{g}$ and the moduli space of $\mathcal{W}_{q,s}$ is
$\Pi(e(\mathbb{T}_{(0;q,q,q,qw,w)}))$. The branch locus of the family consists
of one point $X$ with isotropy group $C_{3}$. This point corresponds to the
case where the group $\Delta$ is inside the triangular group $(0;q,3qw,3w)$.
The point $X$ is singular and the boundary of a neighbourhood of $X$ is
homeomorphic to the lens space $L(3,1)$ (see a survey on lens spaces in \cite{W}, there,
precisely, the lens spaces are studied as quotient singularities).
\end{example}

\begin{example}
The family $\mathcal{Q}$ consists of Riemann surfaces that are $q$-fold (where
$q>5$ is a prime) cyclic coverings of the sphere branched on five points. The
$q$-cyclic coverings defining the family have some special types which we
shall describe in terms of Fuchsian groups.

Let $\mathbb{T}_{(0;q,\overset{5}{...},q)}$ be the Teichm\"{u}ller space of
groups $\Delta$ with signature $(0;q,\overset{5}{...},q)$, and
\[
\left\langle x_{i},i=1,...,5:x_{i}^{q}=1,\text{ }i=1,...,5\text{ and }%
x_{1}...x_{5}=1\right\rangle
\]
be a canonical presentation for these Fuchsian groups. The family
$\mathcal{Q}$ has dimension four, the surfaces of the family are uniformized
by the surface groups in the kernel of the epimorphism $\theta
:\Delta\rightarrow C_{q}=\left\langle l:l^{q}=1\right\rangle $, given by
$\theta(x_{i})=l$, $i=1,...,3$ and $\theta(x_{4})=\theta(x_{5})=l^{\frac
{q-3}{2}}$. The inclusion $\ker\theta\subset\Delta$ induces an embedding
$e:\mathbb{T}_{(0;q,q,q,q,q)}\rightarrow\mathbb{T}_{g}$ and the moduli space
of $\mathcal{Q}$ is $\Pi(e(\mathbb{T}_{(0;q,\overset{5}{...},q)}))$. The
branch locus of the family consists of a dimension two subset $\mathcal{L}$ corresponding to
cone points with isotropy group of order $2$ and one point $Y$ with isotropy
group $D_{3}$. The points in $\mathcal{L}$ correspond to Fuchsian groups
$\Delta$ \ in $\mathbb{T}_{(0;q,\overset{5}{...},q)}$ contained in Fuchsian
groups $\Lambda$ with signature $(0;2,q,q,2q)$ and the point $Y$ corresponds
to the case where the group $\Delta$ is inside the triangular group
$(0;2,2q,3q)$. The points in $\mathcal{L}$ have a neighbourhood $U$ such that
the boundary is the tridimensional sphere, since the covering
\[
\Pi(e(\mathbb{T}_{(0;q,\overset{5}{...},q)})\cap\Pi^{-1}(\partial
U))\rightarrow\partial U
\]
is given by the quotient of a rotation around a trivial knot. The singular
point $Y$ has also a neighbourhood $V$ whose boundary is homeomorphic to
$\mathbb{S}^{3}$, because the intersection $\partial V\cap\mathcal{L}$ is the
trefoil knot and the covering
\[
\Pi(e(\mathbb{T}_{(0;q,\overset{5}{...},q)})\cap\Pi^{-1}(\partial
U))\rightarrow\partial U
\]
is the universal covering of the orbifold defined on $\mathbb{S}^{3}$ with
singular orbifold locus the trefoil knot with isotropy group $C_{2}$ for the
points in the branch locus. The covering is equivalent to the composition of
coverings $\mathbb{S}^{3}\overset{3:1}{\rightarrow}%
L(3,1)\overset{2:1}{\rightarrow}\mathbb{S}^{3}$ (the first covering is unbranched and the second one is the given by the Montesinos involution). 
Note that in $L(3,1)$ there are two types of involutions (see \cite{HR}): one of them being represented by the Montesinos involution and the other one given by circle action but in this case, the lifts of this involution together with the order three automorphism produce no a dihedral action on $\mathbb{S}^{3}$.

\end{example}

We feel very happy to conclude this article in honour of Professor Maite Lozano with
this application of the theory of branched coverings of $3-$manifolds.

\end{document}